\documentclass[11pt,a4paper]{article}
\usepackage[left=2cm, top=3cm,bottom=3cm,right=2cm]{geometry}
\usepackage{mathtools,amssymb,amsthm,mathrsfs,calc,graphicx,xcolor,cleveref,dsfont,tikz,pgfplots,bm,url,tabularx}
\usepackage[british]{babel}
\usepackage{amsfonts}              
\usepackage[T1]{fontenc}
\usepackage{enumitem}
\usepackage[color=green]{todonotes}
\usepackage{tikz-cd}
		\usetikzlibrary{calc} 
		\usetikzlibrary{intersections}
\usepackage[labelfont=bf]{caption}
\usepackage[font=small]{subcaption}

\theoremstyle{plain}
\newtheorem{theorem}{Theorem}

\newtheorem{corollary}[theorem]{Corollary}

\theoremstyle{definition}

\newtheorem{example}[theorem]{Example}

\theoremstyle{remark}
\newtheorem{remark}[theorem]{Remark}

\newcommand{\dint}{\textup{d}}

\newcommand{\vol}{\mathop{\mathrm{vol}}\nolimits}

\def\CC{\mathbb{C}}

\def\EE{\mathbb{E}}

\def\HH{\mathbb{H}}

\def\OO{\mathbb{O}}
\def\PP{\mathbb{P}}

\def\RR{\mathbb{R}}

\makeatletter
\let\@fnsymbol\@alph
\makeatother

\begin{document}

\title{\bfseries Visibility in the Boolean Model \\ on Harmonic Manifolds}

\author{Enkelejd Hashorva\footnotemark[1]\,\, and Christoph Th\"ale\footnotemark[2]}

\date{}
\renewcommand{\thefootnote}{\fnsymbol{footnote}}
\footnotetext[1]{University of Lausanne, Switzerland. Email: enkelejd.hashorva@unil.ch}
\footnotetext[2]{Ruhr University Bochum, Germany. Email: christoph.thaele@rub.de}

\maketitle

\begin{abstract}
	\noindent
	In Poisson Boolean models with deterministic ball grains, the directional visible range from an uncovered point is known to be exponentially distributed in Euclidean and real hyperbolic space. We show that the same phenomenon holds on every simply connected non-compact homogeneous harmonic manifold. The geometric mechanism behind this fact is the affine-linear growth of tube volumes around geodesic segments. As a consequence, we identify the finiteness regime for the expected volume of the visible region, including a geometric interpretation of the critical threshold in the positive-entropy case. We also construct explicit complete non-homogeneous Riemannian manifolds showing that exact exponentiality is tied to exact tube linearity: superlinear tube growth leads to Weibull-type tails, while asymptotic tube linearity still yields an exponential decay rate.
	\\
	
	\noindent {\bf Keywords:} Boolean model, harmonic manifold, threshold phenomenon, tube property, stochastic geometry, visibility\\
	{\bf MSC:} Primary: 60D05; Secondary: 53C20, 53C22, 53C35
\end{abstract}


\section{Introduction and summary of the results}

A classical question in stochastic geometry asks how far one can see in a prescribed direction through a random medium before the line of sight is blocked. In its simplest form, this problem is naturally modelled by the Boolean model. In the standard Euclidean setting, the Boolean model is obtained by placing grains, for example balls, at the points of a homogeneous Poisson process and taking the union of all grains. In the present paper, we focus on the case of deterministic ball grains, since this already captures the geometric mechanism underlying the phenomenon we wish to study.

Let \(M\) be a simply connected non-compact homogeneous Riemannian manifold. We denote by \(\vol_M\) the Riemannian volume measure on \(M\). Let \(\eta\) be a Poisson point process on \(M\) with intensity measure \(\lambda\vol_M\), where \(\lambda>0\), and fix \(\rho>0\). The associated Boolean model is the random closed set
\[
Z:=\bigcup_{x\in\eta} B(x,\rho),
\]
where \(B(x,\rho)\) denotes the closed geodesic ball of radius \(\rho\) centred at \(x\). We are interested in the geometry of the vacant region \(M\setminus Z\), and in particular in the visible range from a point that is not covered by the occupied phase $Z$.

To make this precise, fix a reference point \(o\in M\), condition on the event \(o\notin Z\), and let \(u\) be a unit tangent vector at \(o\). Writing \(\gamma_u:[0,\infty)\to M\) for the geodesic ray starting at \(o\) in direction \(u\), we define the visible range in direction \(u\) by
\[
s(u):=\inf\{r\geq 0:\gamma_u(r)\in Z\},
\]
with the convention that $\inf\varnothing=\infty$. Equivalently,
\[
s(u)=\sup\{r\geq 0:\gamma_u([0,r])\subset M\setminus Z\}.
\]
Thus, \(s(u)\) is the maximal distance one can travel along the ray \(\gamma_u\) before first hitting the occupied phase.

In Euclidean space, a direct application of the Poisson avoidance formula shows that \(s(u)\), conditioned on \(o\notin Z\), has an exponential distribution. Remarkably, the same phenomenon persists in real hyperbolic space as shown in \cite{BuehlerHugThaeleBM}. At first sight, this may seem surprising: Euclidean space is flat and has polynomial volume growth, whereas hyperbolic space has constant negative curvature and exponential volume growth. The fact that directional visibility exhibits the same distributional form in these geometrically very different settings naturally leads to the following question:
\medskip

\noindent
\textit{Which geometric properties of the underlying manifold ensure that the
	visible range in a fixed direction has an exponential distribution?}

\medskip

The purpose of the present paper is to investigate this question. Our main result identifies a rigid geometric mechanism behind the exponential law: it is governed by the volume growth of tubes around geodesic segments. More precisely, we show that the exponential visibility property holds on simply connected non-compact homogeneous harmonic manifolds, thereby recovering both the Euclidean and the real hyperbolic cases in a unified framework. At the same time, this yields a new and simplified proof of \cite[Proposition 6.1]{BuehlerHugThaeleBM} in the special case of deterministic ball grains.

Recall that a connected Riemannian manifold is called \emph{harmonic} if the volume density in normal coordinates about any point depends only on the radial variable. Equivalently, small geodesic spheres have mean curvature depending only on the radius, see \cite{GrayBook,KnieperSurvey}. The homogeneous harmonic manifolds are particularly rigid. By a theorem of Heber \cite{HeberHarmonicHomogeneous}, every simply connected homogeneous harmonic manifold is either flat, a rank-one symmetric space of non-compact type, or a non-symmetric Damek--Ricci space, for which we refer to \cite{DRSpaces}. Thus, the class considered here contains the Euclidean spaces, the real, complex and quaternionic hyperbolic spaces, the Cayley hyperbolic plane, and the non-symmetric Damek--Ricci spaces.

The relevant geometric input is the tube property: For every fixed radius \(\rho>0\), the volume of the \(\rho\)-tube around a geodesic segment of length \(r\) is affine linear in \(r\). More precisely, there exists a constant \(a_\rho\in(0,\infty)\), depending only on \(M\) and \(\rho\), such that
\[
\vol_M(T_\rho(\gamma_u[0,r]))
=
\vol_M(B(o,\rho))+a_\rho r,
\qquad r\geq 0.
\]
Here, \(T_\rho(\gamma_u[0,r])\) denotes the closed \(\rho\)-neighbourhood of the geodesic segment \(\gamma_u([0,r])\).
Combining this with the Poisson avoidance formula gives
\[
\PP\bigl(s(u)>r\,\big|\,o\notin Z\bigr)
=
\exp(-\lambda a_\rho r),
\qquad r\geq 0.
\]
Thus, \(s(u)\), conditionally on \(o\notin Z\), is exponentially distributed with parameter \(\lambda a_\rho\).

As an application of this result, we investigate the mean volume of the entire visible region from the reference point. Conditioned on \(o\notin Z\), let \(V_o\) be the set of points \(x\in M\) for which the (unique) geodesic segment from \(o\) to \(x\) does not intersect \(Z\). In real hyperbolic space it was shown in \cite{BuehlerHugThaeleBM} that the mean visible volume undergoes a phase transition. Namely, it is finite if and only if the intensity \(\lambda\) is strictly above a critical value depending on the dimension and on the radius \(\rho\). We prove the corresponding statement in the present generality. For this, let
\[
h:=\lim_{r\to\infty}{\log\vol_M(B(o,r))\over r}
\]
be the volume entropy of \(M\). In the homogeneous harmonic examples considered here, the geodesic sphere volumes satisfy a two-sided exponential bound with exponent \(h\), if $h>0$. Hence, using geodesic polar coordinates and the exponential law for \(s(u)\), the finiteness of the mean visible volume is governed by the comparison between \(\lambda a_\rho\) and \(h\). More precisely, we show that
\[
\EE[\vol_M(V_o)\mid o\notin Z]<\infty
\qquad\Longleftrightarrow\qquad
\lambda > {h\over a_\rho}.
\]
This also includes the flat case \(h=0\): in Euclidean spaces the expected visible volume is finite for every \(\lambda>0\). If \(h>0\), this gives the critical intensity $\lambda_c(M,\rho)={h\over a_\rho}$ above which the conditional mean visible volume is finite. At the same time, the explicit formula for $\lambda_c(M,\rho)$ delivers a geometric interpretation: it is the ratio between the exponential volume growth rate of the ambient space and the linear growth rate of tubes around geodesic segments.

Finally, we show that the homogeneous harmonic assumption cannot simply be discarded. We construct a complete non-homogeneous Riemannian surface on which the tube volume around a distinguished geodesic ray grows superlinearly. For the associated Boolean model, the conditional visible range in that direction is no longer exponentially distributed. Instead, its tail has a Weibull-type decay. This demonstrates that the exponential law is not merely a consequence of the Poissonian nature of the model, but is tied to the differential geometry of the underlying space. In this context, we also discuss an intermediate phenomenon. Exact affine linearity of tube volumes yields an exact exponential distribution for the directional visible range. If the tube volume is only asymptotically linear, then one should not expect an exact exponential law in general, but the visible range still has an exponential decay rate. We illustrate this by two explicit non-homogeneous examples in which an averaged transverse tube density exists along the chosen geodesic.

In \cite{LyonsRandomShadows}, Lyons has studied a related visibility problem on Cartan--Hadamard manifolds, that is, complete simply connected Riemannian manifolds with non-positive sectional curvature. The simply connected homogeneous harmonic manifolds considered in our main result form a special
subclass of this setting. The emphasis in the present paper is different, however. Lyons studies visibility to infinity, or equivalently coverage of the ideal boundary by random shadows, whereas we first compute the visible range in one fixed direction and then derive consequences for the expected visible volume. Our examples beyond the harmonic setting show that, even within the class of Cartan--Hadamard manifolds, exact exponentiality of the directional visible range is tied to exact affine-linear tube growth.

\section{A positive result for harmonic manifolds}

\subsection{Exponential visibility range from the tube property}

Throughout this section, \(M\) is a simply connected non-compact homogeneous
Riemannian manifold. We denote by \(\vol_M\) the Riemannian volume measure and by $d_g(\,\cdot\,,\,\cdot\,)$ the Riemannian distance on
\(M\). Let \(\eta\) be a Poisson point process on \(M\) with intensity measure
\(\lambda\vol_M\), where \(\lambda>0\), and fix \(\rho>0\). We consider the
Boolean model
\[
Z:=\bigcup_{x\in\eta} B(x,\rho),
\]
where \(B(x,\rho)\) denotes the closed geodesic ball of radius \(\rho\) centred
at \(x\).

Fix \(o\in M\), a unit tangent vector \(u\in T_oM\), and let
\(\gamma_u:[0,\infty)\to M\) be the corresponding geodesic ray with
\(\gamma_u(0)=o\) and \(\dot\gamma_u(0)=u\). For \(r\geq 0\), the event
\(\{s(u)>r\}\), where we recall that $s(u)$ denotes the visible range in the direction of $u$, is equivalent to the statement that the segment
\(\gamma_u([0,r])\) does not meet the occupied phase $Z$, that is,
\[
\{s(u)>r\}
=
\bigl\{\gamma_u([0,r])\cap Z=\varnothing\bigr\}.
\]
Since the grains are closed geodesic balls of radius \(\rho\), this is
equivalent to the event that the Poisson process \(\eta\) has no point in the
\(\rho\)-tube around \(\gamma_u([0,r])\). Thus,
\[
\{s(u)>r\}
=
\bigl\{\eta\bigl(T_\rho(\gamma_u[0,r])\bigr)=0\bigr\},
\]
where
\[
T_\rho(\gamma_u[0,r])
:=
\{x\in M:d_g(x,\gamma_u([0,r]))\leq \rho\}.
\]
Therefore, by the Poisson avoidance formula,
\[
\PP\bigl(s(u)>r\bigr)
=
\exp\bigl(-\lambda\,\vol_M(T_\rho(\gamma_u[0,r]))\bigr).
\]
Similarly,
\[
\PP(o\notin Z)
=
\exp\bigl(-\lambda\,\vol_M(B(o,\rho))\bigr).
\]
Since \(B(o,\rho)\subset T_\rho(\gamma_u[0,r])\), we obtain
\begin{equation}\label{eq:Probtube}
	\PP\bigl(s(u)>r\,\big|\,o\notin Z\bigr)
	=
	\exp\Bigl(
	-\lambda\bigl[
	\vol_M(T_\rho(\gamma_u[0,r]))
	-
	\vol_M(B(o,\rho))
	\bigr]
	\Bigr).
\end{equation}
Consequently, the conditional law of \(s(u)\) is exponential whenever, for the
given radius \(\rho\), the map
\[
r\longmapsto \vol_M(T_\rho(\gamma_u[0,r]))
\]
is affine linear in \(r\) with $\vol_M(T_\rho(\gamma_u([0,0])))=\vol_M(B(o,\rho))$.

We now recall the geometric input behind this linearity. A connected
Riemannian manifold is called \emph{harmonic} if the volume density in normal
coordinates about any point depends only on the radial variable. Equivalently,
small geodesic spheres have mean curvature depending only on the radius. We
refer to the survey article of Knieper and Peyerimhoff \cite{KnieperSurvey}
for background material on (non-compact) harmonic manifolds. At the local level, harmonic manifolds are characterized by a tube property for
geodesic segments. More precisely, Csikós and Horváth
\cite[Theorem 3]{CsikosHorvathHarmonicTubes} showed that harmonicity is equivalent to the
fact that, for sufficiently small tube radii, the volume of a tube around a
geodesic segment depends only on the tube radius and on the length of the
segment.

In the homogeneous setting, the structure of harmonic manifolds is much more
rigid. By a theorem of Heber \cite[Corollary 1.2]{HeberHarmonicHomogeneous}, every simply
connected homogeneous harmonic manifold is either flat, a rank-one symmetric
space of non-compact type, or a non-symmetric Damek--Ricci space. {The
rank-one symmetric spaces of non-compact type are precisely the real, complex
and quaternionic hyperbolic spaces \(\RR H^n\), \(\CC H^m\), \(\mathbb H H^m\), together with the Cayley hyperbolic plane \(\mathbb O H^2\).  For Damek--Ricci spaces we follow \cite[Chapter 3]{DRSpaces}, use the notation
\(\mathfrak n=\mathfrak v\oplus\mathfrak z\) for the two-step nilpotent part
of the corresponding solvable Lie algebra and put
\[
p:=\dim\mathfrak v,
\qquad
q:=\dim\mathfrak z.
\]
The same notation also covers the rank-one symmetric
examples. Namely, one has $(p,q)=(n-1,0)$ for \(\RR H^n\),
and
\[
(p,q)=(2m-2,1),\qquad (p,q)=(4m-4,3),\qquad (p,q)=(8,7),
\]
for \(\CC H^m\), \(\mathbb H H^m\), and \(\mathbb O H^2\), respectively.} For background on Damek--Ricci spaces, their construction from generalized Heisenberg groups,
and their role in the theory of harmonic manifolds, we refer to \cite{DRSpaces}. We remark that these spaces do not occur in very low dimension: the first genuinely non-symmetric examples have real dimension \(7\).
Thus, up to dimension \(6\), the simply connected homogeneous harmonic manifolds are exhausted by the flat and rank-one symmetric cases. Moreover, explicit tube formulas are available in each of these families. For
Euclidean and rank-one symmetric spaces this goes back to Gray and Vanhecke
\cite{GrayVanheckeTubes}, while for Damek--Ricci spaces the corresponding
formulas were obtained by Csikós and Horváth \cite[Theorem 4.1]{CsikosHorvathTubes}.

It follows from the classification in \cite{HeberHarmonicHomogeneous} and the
explicit tube formulas from \cite{CsikosHorvathTubes,GrayVanheckeTubes} that,
for every simply connected non-compact homogeneous harmonic manifold, the
volume of a tube around a geodesic segment is an affine function of the length
of the segment. More precisely, for every fixed \(\rho>0\), there exists a
constant \(a_\rho\in(0,\infty)\), depending only on \(M\) and \(\rho\), such
that
\begin{equation}\label{eq:arho}
	\vol_M(T_\rho(\gamma_u[0,r]))
	=
	\vol_M(B(o,\rho))+a_\rho r,
	\qquad r\geq 0.
\end{equation}
The coefficient \(a_\rho\) does not depend on the base point
\(o\) or on the direction \(u\). In this formula, the term \(a_\rho r\) is the contribution of the normal tube along the geodesic segment, while the remaining term is the endpoint contribution. Since the spaces under consideration are homogeneous
and harmonic, geodesic balls have the same volume at every centre and their volume density in geodesic polar coordinates is radial. The two endpoint contributions therefore combine to \(\vol_M(B(o,\rho))\).

For example, if \(M=\RR^n\), then the tube around a line segment is the
Minkowski sum of the segment and the Euclidean ball of radius \(\rho\), and
therefore
\[
a_\rho=\kappa_{n-1}\rho^{n-1}.
\]
If \(M=\RR H^n\) is the \(n\)-dimensional real hyperbolic space with sectional
curvature \(-1\), then
\[
a_\rho=\kappa_{n-1}\sinh^{n-1}(\rho).
\]
Here and in what follows, \(\kappa_k\) denotes the volume of the
\(k\)-dimensional Euclidean unit ball.
{More generally, for rank-one symmetric spaces of non-compact type and for
Damek--Ricci spaces the coefficient can be read off from the corresponding
tube formulas. For a Damek--Ricci space with root multiplicities \(p\) and \(q\) one has
\[
a_\rho
=
\kappa_{p+q}\sinh^{p+q}(\rho)\cosh^q(\rho),
\]
see \cite[Theorem 4.1]{CsikosHorvathTubes} (it should be noted that with our normalization the distances need to be scaled by a factor \(1/2\) compared with the normalization in \cite{CsikosHorvathTubes}). 
This includes also the other rank-one symmetric spaces \(\CC H^m\), \(\HH H^m\) and
\(\OO H^2\), for which
\begin{align*}
a_\rho
&=
\kappa_{2m-1}\sinh^{2m-1}(\rho)\cosh(\rho),\qquad\,\,\text{(complex hyperbolic space)}\\
a_\rho
&=
\kappa_{4m-1}\sinh^{4m-1}(\rho)\cosh^3(\rho),\qquad\text{(quaternionic hyperbolic space)}\\
a_\rho
&=
\kappa_{15}\sinh^{15}(\rho)\cosh^7(\rho)\qquad\quad\qquad \text{(Cayley hyperbolic plane)}.
\end{align*}}

Substituting \eqref{eq:arho} into \eqref{eq:Probtube} gives
\[
\PP\bigl(s(u)>r\,\big|\,o\notin Z\bigr)
=
\exp(-\lambda a_\rho r),
\qquad r\geq 0.
\]
This proves the following theorem.

\begin{theorem}\label{thm:exp_visibility_harmonic}
	Let \(M\) be a simply connected non-compact homogeneous harmonic manifold.
	Consider the Boolean model on \(M\) with intensity \(\lambda>0\) and fixed
	ball grains of radius \(\rho>0\). Then there exists a constant
	\(a_\rho\in(0,\infty)\), depending only on \(M\) and \(\rho\), such that
	for every \(o\in M\), every unit tangent vector \(u\in T_oM\), and every
	\(r\geq 0\),
	\[
	\PP\bigl(s(u)>r\,\big|\,o\notin Z\bigr)
	=
	e^{-\lambda a_\rho r}.
	\]
\end{theorem}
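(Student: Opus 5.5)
The plan is to reduce the statement to the Poisson avoidance computation already carried out above, and then to justify the tube identity \eqref{eq:arho} as the only geometric input. First I would record the event identity $\{s(u)>r\}=\{\eta(T_\rho(\gamma_u[0,r]))=0\}$. This needs a short check: the grains are closed balls, so $\gamma_u(t)\in B(x,\rho)$ for some $t\le r$ if and only if $d_g(x,\gamma_u([0,r]))\le\rho$, using compactness of the segment so that the distance is attained. The same check handles the boundary case $\gamma_u(r)\in Z$. Then I would apply the avoidance formula to the compact set $T_\rho(\gamma_u[0,r])$ and to $B(o,\rho)$. Because $\{s(u)>r\}\subset\{o\notin Z\}$, dividing the two probabilities gives \eqref{eq:Probtube}.

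Second, I would establish \eqref{eq:arho} with $a_\rho$ independent of $o$ and $u$. Homogeneity alone gives independence of $o$. Harmonicity together with the Csikós--Horváth tube characterization (for small radii) and Heber's classification plus the explicit formulas of Gray--Vanhecke and Csikós--Horváth (for all radii) gives that the tube volume depends only on $(\rho,r)$ and is affine in $r$. As a self-contained route to affinity, I would decompose the tube using the nearest-point projection onto the segment. Since $M$ is Cartan--Hadamard, so that distance to a geodesic segment is convex and the projection is well defined, the tube splits into a cylinder part and two half-ball caps. The cylinder part consists of points projecting to interior points, i.e.\ the normal exponential image $\{\exp_{\gamma_u(t)}(v): v\perp\dot\gamma_u(t), |v|\le\rho, 0<t<r\}$. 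The two caps are at the endpoints. By harmonicity the density of geodesic polar coordinates is radial, so each half-ball cap has volume $\tfrac12\vol_M(B(o,\rho))$. By homogeneity, more precisely transitivity on the unit tangent bundle (known for these spaces, and otherwise supplied by the explicit formulas), the normal cross-section volume $a_\rho$ is the same at every $t$. Fubini along $t$ then gives $a_\rho r$. Finiteness of $a_\rho$ follows from compactness of the cross-section, and positivity from the fact that the cross-section has nonempty interior.

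The main obstacle is the second step: making the cylinder-plus-caps decomposition rigorous globally for every $\rho$, not just for $\rho$ below the injectivity/focal radius. One would need injectivity of the normal exponential map on $\{|v|\le\rho\}$ along the open segment, and a clean identification of the caps as half-balls. On Cartan--Hadamard manifolds there are no focal points, so the normal exponential map of a geodesic is a diffeomorphism. If, however, the Jacobian of this map is not constant in $t$, transitivity of the isometry group on unit vectors is the point I would lean on. Failing that, I would simply invoke the explicit tube formulas cited before the theorem, which give \eqref{eq:arho} directly.

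Finally, substituting \eqref{eq:arho} into \eqref{eq:Probtube} yields $e^{-\lambda a_\rho r}$ for all $r\ge0$, uniformly in $o$ and $u$.
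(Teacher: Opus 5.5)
Your main line is the paper's proof. You use the event identity $\{s(u)>r\}=\{\eta(T_\rho(\gamma_u[0,r]))=0\}$ and the avoidance formula to get \eqref{eq:Probtube}, and then take \eqref{eq:arho} from Heber's classification together with the explicit tube formulas of Gray--Vanhecke and Csik\'os--Horv\'ath. Relying on that citation, the argument is complete and matches the paper.

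The ``self-contained route'' does not work as stated. The cylinder/caps decomposition via nearest-point projection is fine on these Cartan--Hadamard spaces. Radiality of the polar density does give two caps of volume $\tfrac12\vol_M(B(o,\rho))$ each. The failing step is the claim that the isometry group acts transitively on the unit tangent bundle: this is false for non-symmetric Damek--Ricci spaces. Such transitivity would make $M$ two-point homogeneous, hence symmetric, and these spaces are exactly the counterexamples to the Lichnerowicz conjecture. So there need not be an isometry carrying $(\gamma_u(t),\dot\gamma_u(t))$ to $(\gamma_u(t'),\dot\gamma_u(t'))$. Constancy in $t$ of the weighted normal cross-section must therefore come from harmonicity itself, not from symmetry.

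A correct version of your idea runs as follows. Convexity of $t\mapsto d_g(x,\gamma_u(t))$ gives
\[
T_\rho(\gamma_u[0,r])\cap T_\rho(\gamma_u[r,r+s])=B(\gamma_u(r),\rho).
\]
Hence $W(r):=\vol_M(T_\rho(\gamma_u[0,r]))-\vol_M(B(o,\rho))$ is additive and monotone, and so linear, provided tube volumes around geodesic segments depend only on their length. That last fact is the Csik\'os--Horv\'ath characterization, but it holds only for small $\rho$. For arbitrary $\rho$ you are back to the explicit formulas, or you need an extra argument such as real-analyticity of the tube volume in $\rho$. In the non-symmetric case the citation is doing the real work, exactly as in the paper.
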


\begin{remark}\label{rem:no_nonhomogeneous_known}
	At present, no non-homogeneous harmonic manifolds are known, see, for
	example, the discussion in the introduction of
	\cite{CsikosHorvathHarmonicTubes} and \cite[p.\ 100]{GrayBook}. Thus, the homogeneous setting considered
	here covers all presently known non-compact harmonic examples.
\end{remark}

\subsection{Threshold for the expected volume of the visible region}

We now turn to the volume of the visible region from \(o\). Conditioned on
\(o\notin Z\), let \(V_o\) be the set of all points \(x\in M\) such that the unique
geodesic segment from \(o\) to \(x\) does not intersect \(Z\).  More formally, for the spaces considered in this section, the usual exponential map
\(\exp_o:T_oM\to M\) is a global diffeomorphism for every \(o\in M\).
In particular, each point \(x\in M\) has a unique polar representation
\(x=\exp_o(ru)\), with \(r\geq 0\) and a unit tangent vector \(u\in T_oM\). Then we can write
\[
V_o
:=
\bigl\{\exp_o(ru): r\geq 0,\ u\in T_oM \text{ a unit vector},\ 
\gamma_u([0,r])\cap Z=\varnothing\bigr\}.
\]
Further, let
\[
h
:=
\lim_{r\to\infty}{\log\vol_M(B(o,r))\over r}
\]
be the volume entropy of \(M\). By homogeneity, this number does not depend on the choice of
\(o\). In the flat case \(M=\RR^n\), one has \(h=0\). {The volume entropy of a Damek--Ricci space with
root multiplicities \(p\) and \(q\) is
\[
h=p+2q .
\]
This convention also covers the rank-one symmetric spaces of non-compact type.
Indeed, the corresponding pairs \((p,q)\) are
\[
(n-1,0),\qquad (2m-2,1),\qquad (4m-4,3),\qquad (8,7),
\]
for \(\RR H^n\), \(\CC H^m\), \(\mathbb H H^m\), and \(\mathbb O H^2\),
respectively. Thus
\[
h=n-1,\qquad h=2m,\qquad h=4m+2,\qquad h=22
\]
in these four cases.}

The following corollary gives the threshold for the expected visible volume in terms of the tube coefficient and the volume entropy of the ambient manifold.

\begin{corollary}\label{cor}
	Let \(M\) be a simply connected non-compact homogeneous harmonic manifold
	with volume entropy \(h\geq 0\). Consider the Boolean model on \(M\) with
	intensity \(\lambda>0\) and fixed ball grains of radius \(\rho>0\), and let
	\(a_\rho\) be the tube coefficient from \eqref{eq:arho}. If \(h=0\), then $\EE[\vol_M(V_o)\mid o\notin Z]<\infty$ for every $\lambda>0$.
	If \(h>0\), then
	\[
	\EE[\vol_M(V_o)\mid o\notin Z]<\infty
	\qquad\Longleftrightarrow\qquad
	\lambda>\lambda_c(M,\rho):={h\over a_\rho}.
	\]
\end{corollary}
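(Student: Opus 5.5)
We write $\EE[\vol_M(V_o)\mid o\notin Z]$ in geodesic polar coordinates around $o$ and integrate the exponential tail from Theorem~\ref{thm:exp_visibility_harmonic}. Since $\exp_o$ is a global diffeomorphism, there is a density $A(r)\ge 0$ with $\vol_M(\dint x)=A(r)\,\dint r\,\dint\sigma(u)$, where $\sigma$ is the normalised uniform measure on the unit sphere $S_o$ of $T_oM$. Harmonicity guarantees that this density depends only on $r$, and $\vol_M(\partial B(o,r))=A(r)$.

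By Fubini and Tonelli, all integrands being nonnegative and the indicator of $\{s(u)>r\}$ jointly measurable in $(r,u,\omega)$,
\[
\EE[\vol_M(V_o)\mid o\notin Z]=\int_{S_o}\int_0^\infty A(r)\,\PP\bigl(s(u)>r\,\big|\,o\notin Z\bigr)\,\dint r\,\dint\sigma(u)=\int_0^\infty A(r)e^{-\lambda a_\rho r}\,\dint r .
\]
We use $\PP(s(u)\geq r\mid\cdot)$ versus $\PP(s(u)>r\mid\cdot)$ interchangeably, since they differ only on a null set of $r$ by continuity of the exponential. The problem therefore reduces to deciding when the Laplace-type integral $\int_0^\infty A(r)e^{-\lambda a_\rho r}\,\dint r$ is finite.

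\textbf{Flat case.} Here $A(r)=n\kappa_n r^{n-1}$, so the integral equals $n\kappa_n (n-1)!/(\lambda a_\rho)^n<\infty$ for every $\lambda>0$.

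\textbf{Case $h>0$.} We need a two-sided bound $c_1e^{hr}\le A(r)\le c_2e^{hr}$ for $r\geq 1$, with constants $0<c_1\le c_2$.
\begin{itemize}
\item For $\RR H^n$ this is explicit: $A(r)=n\kappa_n\sinh^{n-1}r$ and $h=n-1$.
\item For a Damek--Ricci space with parameters $(p,q)$ we use the standard formula from \cite{DRSpaces}. With the normalisation fixed above, $A(r)=c\,\sinh^{p+q}(r)\cosh^{q}(r)$ for some constant $c>0$, with $p+q=n-1$. The exponent of $e^{r}$ for large $r$ is then $p+2q=h$, consistent with the entropy values listed before the corollary.
\end{itemize}
This covers all cases with $h>0$ by Heber's classification.

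With the two-sided bound in hand, $\int_1^\infty A(r)e^{-\lambda a_\rho r}\,\dint r$ is comparable to $\int_1^\infty e^{(h-\lambda a_\rho)r}\,\dint r$. This is finite if and only if $\lambda a_\rho>h$; in particular it diverges at equality. The piece over $[0,1]$ is finite because $A$ is continuous, so the conditional mean is finite exactly when $\lambda>h/a_\rho$.

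\textbf{Main obstacle.} The delicate point is justifying the two-sided bound, and with it the identification of $h$, using one consistent metric normalisation for $A(r)$ and for $a_\rho$. In particular, the factor-$1/2$ rescaling relative to \cite{CsikosHorvathTubes} must be tracked. I would first verify that, in the scaling where $a_\rho=\kappa_{p+q}\sinh^{p+q}\rho\cosh^q\rho$, the sphere density is proportional to $\sinh^{p+q}r\cosh^q r$. This check is natural because the tube coefficient is essentially the $(n-1)$-volume of the normal geodesic $\rho$-disc, which is governed by the same Jacobi field structure. The remaining steps are routine measurability and Tonelli arguments.
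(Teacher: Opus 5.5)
Your proposal is correct and follows essentially the paper's route. Tonelli and global polar coordinates reduce the conditional mean to $\int_0^\infty S(r)e^{-\lambda a_\rho r}\,\dint r$, the case $h=0$ (which forces flatness by Heber's classification) is immediate, and for $h>0$ a two-sided bound $S(r)\asymp e^{hr}$ on $[1,\infty)$ settles finiteness. The only difference is that you read this bound off the explicit Damek--Ricci density $\sinh^{p+q}(r)\cosh^q(r)$, whereas the paper cites Knieper's volume density estimates; your density is correct in the paper's normalization, though no normalization bookkeeping is really needed, since $a_\rho$ and $h$ are both intrinsic to the given metric.
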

\begin{proof}
	By Fubini's theorem,
	\[
	\EE[\vol_M(V_o)\mid o\notin Z]
	=
	\int_M
	\PP(x\in V_o\mid o\notin Z)\,\vol_M(\dint x).
	\]
	Since the spaces
	under consideration are simply connected homogeneous harmonic manifolds, we may
	use global geodesic polar coordinates around \(o\). Then
	\begin{equation}\label{eq:intThreshold}
	\EE[\vol_M(V_o)\mid o\notin Z]
	=
	\int_0^\infty
	e^{-\lambda a_\rho r}S(r)\,\dint r,
	\end{equation}
	where \(S(r)\) denotes the surface area of the
	geodesic sphere of radius \(r\) around \(o\). Indeed, by
	Theorem~\ref{thm:exp_visibility_harmonic}, the conditional probability that
	the geodesic segment of length \(r\) from \(o\) is visible is
	\(e^{-\lambda a_\rho r}\), independently of the direction.
	
	If \(h=0\), then \(M\) is flat by Heber's classification in \cite{HeberHarmonicHomogeneous}, and hence
	\(M\) is isometric to \(\RR^n\). In this case $S(r)=n\kappa_n r^{n-1}$,
	and therefore $\int_0^\infty e^{-\lambda a_\rho r}S(r)\,\dint r<\infty$ for every \(\lambda>0\).
	
	It remains to consider the case \(h>0\). For simply connected non-compact
	homogeneous harmonic manifolds with positive entropy, the geodesic sphere
	volume has pure exponential growth with exponent \(h\), i.e., there exist
	constants \(c_1,c_2>0\) such that
	\[
	c_1e^{hr}\leq S(r)\leq c_2e^{hr},
	\qquad r\geq 1.
	\]
	This follows from the known volume density estimates for these spaces, see
	for instance the discussion before Definition~2.2 and Corollary~5.3 in
	\cite{Knieper12}. Hence, the tail behaviour of the integral in \eqref{eq:intThreshold} is the same as that of
	\[
	\int_{1}^\infty e^{-(\lambda a_\rho-h)r}\,\dint r.
	\]
	This integral is finite if and only if \(\lambda a_\rho>h\). This proves the result.
\end{proof}

Corollary~\ref{cor} recovers the critical intensity previously obtained for
the \(n\)-dimensional real hyperbolic space \(\RR H^n\) in
\cite{BuehlerHugThaeleBM}. In this case \(h=n-1\) and $a_\rho=\kappa_{n-1}\sinh^{n-1}(\rho)$. Thus,
\[
\lambda_c(\RR H^n,\rho)
=
{n-1\over \kappa_{n-1}\sinh^{n-1}(\rho)}.
\]
This agrees with \cite[Theorem~6.2 and Remark~6.5]{BuehlerHugThaeleBM} in the
case of deterministic ball grains.
{For a Damek--Ricci space with root multiplicities \(p\) and \(q\) we obtain the critical intensity
\[
\lambda_c(M,\rho)
=
{p+2q\over
	\kappa_{p+q}\sinh^{p+q}(\rho)\cosh^q(\rho)}.
\]
This formula also covers the rank-one symmetric spaces of non-compact type, for which
\[
\lambda_c(\CC H^m,\rho)
=
{2m\over
	\kappa_{2m-1}\sinh^{2m-1}(\rho)\cosh(\rho)}
\]
and
$$
\lambda_c(\HH H^m,\rho) = {4m+2\over
	\kappa_{4m-1}\sinh^{4m-1}(\rho)\cosh^3(\rho)} \qquad\text{and}\qquad \lambda_c(\OO H^2,\rho) = {22\over
	\kappa_{15}\sinh^{15}(\rho)\cosh^7(\rho)}.
$$}

\section{Examples beyond the harmonic setting}\label{sec:Example}

\subsection{A surface with Weibull-type visibility tails}

We now show that the exponential law for directional visibility is not merely a consequence of the fact that the underlying point process \(\eta\) is Poisson. It also depends on the geometry of the underlying
space. In particular, if the linear tube-growth property fails, then the
visible range need not be exponentially distributed. In what follows, we construct a complete Riemannian surface (manifold of dimension two) for which the tube volume around a
distinguished geodesic ray grows superlinearly. The corresponding visible
range will turn out to have Weibull-type tails. We refer the reader to \cite{Lee} for background material on Riemannian manifolds, particularly the local geometric concepts used in the following computations.

\begin{example}\label{ex1}
Consider \(M=\RR^2\) with global coordinates \((s,t)\) and Riemannian metric
\[
g=dt^2+J(s,t)^2\,ds^2,
\qquad
J(s,t):=1+s^2t^2 .
\]
Since \(J\) is smooth and strictly positive, this defines a smooth Riemannian
metric on \(M\). Moreover, \(J(s,t)\geq 1\) for all \((s,t)\in\RR^2\). Hence,
for every point \((s,t)\in M\) and every tangent vector $v=v_s\partial_s+v_t\partial_t\in T_{(s,t)}M$,
we have
\[
g_{(s,t)}(v,v)
=
v_t^2+J(s,t)^2v_s^2
\geq
v_s^2+v_t^2.
\]
Thus, the Riemannian length of every smooth curve is at least its Euclidean
length, and consequently the Riemannian distance \(d_g\) dominates the
Euclidean distance on \(\RR^2\). Indeed, every \(d_g\)-Cauchy sequence is a
Euclidean Cauchy sequence and hence converges in the Euclidean sense to some
point of \(\RR^2\). Since the metric \(g\) is smooth and positive definite,
the Riemannian and Euclidean distances are locally equivalent near this limit
point. So, the sequence also converges with respect to \(d_g\). Thus,
\((M,g)\) is complete. Moreover, using the definition of $J$ one can compute the Gaussian curvature $K(s,t)$ as
$$
K(s,t) = -{\partial_t^2 J(s,t)\over J(s,t)} = - {2s^2\over 1+s^2t^2}\leq 0.
$$
So, $M$ is a complete, simply connected and non-positively curved Riemannian surface, but it is non-homogeneous.

Let $o:=(0,0)$ and $\Gamma:=\{(s,0):s\geq 0\}$.
We claim that \(\Gamma\) is a geodesic ray parametrized by arc length. Indeed,
the only Christoffel symbols relevant along \(t=0\) are
\[
\Gamma^s_{ss}={\partial_s J\over J},
\qquad
\Gamma^t_{ss}=-J\,\partial_t J.
\]
Since \(J(s,0)=1\), \(\partial_sJ(s,0)=0\), and \(\partial_tJ(s,0)=0\), both
symbols vanish along \(t=0\). Therefore, the curve $\gamma(r):=(r,0)$, $r\geq 0$,
is a geodesic. Since \(J(s,0)=1\), it has unit speed.

We now estimate the volume of the \(\rho\)-tube around the segment
\(\gamma([0,r])\), where \(\rho>0\) is fixed. The Riemannian volume element of $M$ is
\[
\vol_M(\dint s\,\dint t)=J(s,t)\,\dint s\,\dint t
=
\bigl(1+s^2t^2\bigr)\,\dint s\,\dint t .
\]
For \(0\leq s\leq r\), the vertical curve from \((s,t)\) to \((s,0)\) has
length \(|t|\). Conversely, every curve from \((s,t)\) to \(\Gamma\) must
change the \(t\)-coordinate from \(t\) to \(0\), and since the metric contains
the term \(dt^2\), its length is at least \(|t|\). Hence,
\[
d_g\bigl((s,t),\Gamma([0,r])\bigr)=|t|,
\qquad 0\leq s\leq r.
\]
Consequently,
\[
[0,r]\times[-\rho,\rho]
\subset
T_\rho(\gamma[0,r]).
\]
This gives the lower bound
\[
\begin{aligned}
	\vol_M(T_\rho(\gamma[0,r]))
	&\geq
	\int_0^r\int_{-\rho}^{\rho}
	\bigl(1+s^2t^2\bigr)\,\dint t\,\dint s  =
	2\rho r+{2\rho^3\over 3}\int_0^r s^2\,\dint s  =
	2\rho r+{2\rho^3\over 9}r^3 .
\end{aligned}
\]
For the corresponding upper bound, note that the Riemannian distance dominates
the Euclidean distance. Hence,
\[
T_\rho(\gamma[0,r])
\subset
[-\rho,r+\rho]\times[-\rho,\rho],
\]
and it follows that
\[
\begin{aligned}
	\vol_M(T_\rho(\gamma[0,r]))
	&\leq
	\int_{-\rho}^{r+\rho}\int_{-\rho}^{\rho}
	\bigl(1+s^2t^2\bigr)\,\dint t\,\dint s  \\
	&=
	2\rho(r+2\rho)
	+
	{2\rho^3\over 3}
	\int_{-\rho}^{r+\rho}s^2\,\dint s  \\
	&=
	2\rho(r+2\rho)
	+
	{2\rho^3\over 9}
	\bigl((r+\rho)^3+\rho^3\bigr).
\end{aligned}
\]
The lower and upper bounds therefore imply
\begin{equation}\label{eq:superlinear_tube_asymptotic}
	\vol_M(T_\rho(\gamma[0,r]))
	\sim
	{2\rho^3\over 9}\,r^3,
	\qquad\text{as}\;\; r\to\infty,
\end{equation}
where $\sim$ indicates that the quotient of the left- and the right-hand side tends to $1$. In particular, the tube volume is not affine linear in \(r\).

Now, consider the Poisson Boolean model on \(M\) induced by a Poisson point process with intensity measure
\(\lambda\vol_M\), $\lambda>0$, and deterministic ball grains of radius \(\rho\). Let \(R\)
denote the visible range from \(o\) in the direction of the geodesic ray
\(\gamma\). As in the previous section, the Poisson avoidance formula gives
\[
\PP(R>r\mid o\notin Z)
=
\exp\Bigl(
-\lambda\bigl[
\vol_M(T_\rho(\gamma[0,r]))
-
\vol_M(B(o,\rho))
\bigr]
\Bigr).
\]
So, using \eqref{eq:superlinear_tube_asymptotic}, we obtain
\[
\log \PP(R>r\mid o\notin Z)
\sim
-\lambda\,{2\rho^3\over 9}\,r^3,
\qquad\text{as}\;\; r\to\infty .
\]
Thus, the conditional visible range in the direction of $\gamma$ has Weibull-type tails with exponent
\(3\). 
\end{example}

\subsection{Examples with asymptotic tube linearity}

The construction in the preceding subsection shows that, without the harmonic or homogeneous structure, the visible range need not be exponentially distributed. There is, however, an intermediate situation which is worth recording. Even if the tube volume is not exactly affine linear in \(r\), it may still be asymptotically linear. In that case the visible range need not be exponentially distributed, but its tail has still an exponential decay rate. Indeed, if for a geodesic ray \(\gamma\) the limit
\[
a_\rho(\gamma)
:=
\lim_{r\to\infty}
{1\over r}\vol_M(T_\rho(\gamma[0,r]))
\]
exists and belongs to \((0,\infty)\), then the Poisson avoidance formula gives
\[
-\frac{1}{r}\log \PP(R>r\mid o\notin Z)
\to
\lambda a_\rho(\gamma),
\qquad\text{as}\;\; r\to\infty,
\]
where \(R\) denotes the visible range from \(o\) in the direction of \(\gamma\). Indeed,
\[
\PP(R>r\mid o\notin Z)
=
\exp\Bigl(
-\lambda\bigl[
\vol_M(T_\rho(\gamma[0,r]))
-
\vol_M(B(o,\rho))
\bigr]
\Bigr),
\]
and the term \(\vol_M(B(o,\rho))\) is independent of \(r\). Thus, exact tube
linearity yields an exact exponential distribution, while asymptotic tube
linearity yields an exponential decay rate. We present two  examples in which the latter phenomenon occurs.

\begin{example}\label{ex2}
	Let \(M=\RR^2\) with global coordinates \((s,t)\), and equip \(M\) with the
	Riemannian metric
	\[
	g=dt^2+J(s,t)^2\,ds^2,
	\qquad
	J(s,t):=1+\varepsilon q(s)t^2,
	\]
	where \(\varepsilon>0\) and \(q:\RR\to(0,\infty)\) is smooth, bounded and
	non-constant. Assume moreover that the Cesàro mean
	\[
	\bar q
	:=
	\lim_{r\to\infty}{1\over r}\int_0^r q(s)\,\dint s
	\]
	exists. As a concrete example one may take $q(s)=1+{1\over 2}\sin s$ for which $\bar q=1$. Since \(J\geq1\), the argument from Example~\ref{ex1} shows that
	\((M,g)\) is complete.
	
	Let
	\[
	o:=(0,0),
	\qquad
	\Gamma:=\{(s,0):s\geq0\},
	\qquad\text{and}\qquad
	\gamma(r):=(r,0).
	\]
	Since \(J(s,0)=1\), \(\partial_sJ(s,0)=0\), and
	\(\partial_tJ(s,0)=0\), the curve \(\gamma\) is a unit-speed geodesic by the
	same Christoffel-symbol computation as in Example~\ref{ex1}.
	
	The surface $M$ is in general neither homogeneous nor harmonic. Indeed, for a
	metric of the form $g=dt^2+J(s,t)^2\,ds^2$
	the Gaussian curvature is
	\[
	K(s,t)=-{\partial_t^2J(s,t)\over J(s,t)}.
	\]
	In the present case this gives $K(s,0)=-2\varepsilon q(s)$,
	which is not constant if \(q\) is not constant. Hence, the surface is not
	homogeneous. Since a harmonic surface has constant Gaussian curvature, it is
	not harmonic either.
	
	For \(0\leq s\leq r\), the vertical curve from \((s,t)\) to \((s,0)\) has
	length \(|t|\), and every curve from \((s,t)\) to \(\Gamma([0,r])\) has length
	at least \(|t|\). Hence,
	\[
	d_g\bigl((s,t),\Gamma([0,r])\bigr)=|t|,
	\qquad 0\leq s\leq r.
	\]
	Since \(J\geq1\), the Riemannian distance dominates the Euclidean distance.
	Therefore,
	\[
	[0,r]\times[-\rho,\rho]
	\subset
	T_\rho(\gamma[0,r])
	\subset
	[-\rho,r+\rho]\times[-\rho,\rho].
	\]
	The contribution of the two endpoint strips is bounded uniformly in \(r\),
	because \(q\) is bounded. Consequently,
	\[
	\vol_M(T_\rho(\gamma[0,r]))
	=
	\int_0^r\int_{-\rho}^{\rho}
	\bigl(1+\varepsilon q(s)t^2\bigr)\,\dint t\,\dint s
	+
	O(1).
	\]
	It follows that
	\[
	\vol_M(T_\rho(\gamma[0,r]))
	=
	2\rho r
	+
	{2\varepsilon\rho^3\over3}
	\int_0^r q(s)\,\dint s
	+
	O(1),
	\]
	and hence
	\[
	{1\over r}\vol_M(T_\rho(\gamma[0,r]))
	\to
	2\rho+{2\varepsilon\rho^3\over3}\bar q \qquad\text{as}\;\; r\to\infty.
	\]
	Thus, for the Poisson Boolean model with intensity measure
	\(\lambda\vol_M\) and deterministic ball grains of radius $\rho$, the visible range in the direction of \(\gamma\) satisfies
	\[
	-\frac{1}{r}\log \PP(R>r\mid o\notin Z)
	\to
	\lambda\left(2\rho+{2\varepsilon\rho^3\over3}\bar q\right),\qquad\text{as}\;\; r\to\infty.
	\]
\end{example}

\begin{example}\label{ex3}
	The construction of Example~\ref{ex2} has a higher-dimensional analogue. Let
	\(n\geq2\), let \(M=\RR\times\RR^{n-1}\) with global coordinates \((s,y)\),
	where \(y\in\RR^{n-1}\), and equip \(M\) with the Riemannian metric
	\[
	g=\|dy\|^2+J(s,y)^2\,ds^2,
	\qquad
	J(s,y):=1+\varepsilon q(s)\phi(y)
	\]
	 with the Euclidean norm $\|\cdot\|$ in $\RR^{n-1}$.
	Assume that \(q:\RR\to(0,\infty)\) is smooth, bounded and non-constant, and
	that its Cesàro mean
	\[
	\bar q
	:=
	\lim_{r\to\infty}{1\over r}\int_0^r q(s)\,\dint s
	\]
	exists. 
	Assume also that \(\phi:\RR^{n-1}\to[0,\infty)\) is smooth and bounded
	on bounded sets, with $\phi(0)=0$ and  $\nabla\phi(0)=0$. For concreteness one may take $q(s)=1+{1\over 2}\sin s$ with $\bar q=1$ and $\phi(y)=\|y\|^2$.
	Then \(J\geq1\), and hence \(g\) is a smooth complete Riemannian metric on
	\(\RR^n\).
	
	As before, let
	\[
	o:=(0,0),
	\qquad
	\Gamma:=\{(s,0):s\geq0\},
	\qquad\text{and}\qquad
	\gamma(r):=(r,0).
	\]
	Since \(J(s,0)=1\), \(\partial_sJ(s,0)=0\), and
	\(\nabla_yJ(s,0)=0\), the relevant Christoffel symbols vanish along
	\(\Gamma\). Hence, \(\gamma\) is a unit-speed geodesic.
	
	The transverse distance to \(\Gamma([0,r])\) is exactly Euclidean for
	\(0\leq s\leq r\), because the restriction of the metric to each slice
	\(\{s=\mathrm{const}\}\) is \(\|dy\|^2\). Thus,
	\[
	d_g\bigl((s,y),\Gamma([0,r])\bigr)=\|y\|,
	\qquad 0\leq s\leq r.
	\]
	Moreover, since \(J\geq1\), the Riemannian distance dominates the Euclidean
	distance, and therefore
	\[
	[0,r]\times B_{n-1}(0,\rho)
	\subset
	T_\rho(\gamma[0,r])
	\subset
	[-\rho,r+\rho]\times B_{n-1}(0,\rho),
	\]
	where $B_{n-1}(0,\rho)$ is the centred ball of radius $\rho$ in the $\RR^{n-1}$-coordinate.
	Since \(q\) is bounded and \(\phi\) is bounded on \(B_{n-1}(0,\rho)\), the
	endpoint contribution is \(O(1)\). Hence,
	\[
	\vol_M(T_\rho(\gamma[0,r]))
	=
	\int_0^r\int_{B_{n-1}(0,\rho)}
	\bigl(1+\varepsilon q(s)\phi(y)\bigr)\,\dint y\,\dint s
	+
	O(1).
	\]
	Consequently,
	\[
	\vol_M(T_\rho(\gamma[0,r]))
	=
	\kappa_{n-1}\rho^{n-1}r
	+
	\varepsilon
	\left(\int_{B_{n-1}(0,\rho)}\phi(y)\,\dint y\right)
	\int_0^r q(s)\,\dint s
	+
	O(1),
	\]
	and therefore
	\[
	{1\over r}\vol_M(T_\rho(\gamma[0,r]))
	\to
	\kappa_{n-1}\rho^{n-1}
	+
	\varepsilon\bar q
	\int_{B_{n-1}(0,\rho)}\phi(y)\,\dint y \qquad\text{as}\;\; r\to\infty.
	\]
	Thus, for the Poisson Boolean model with intensity measure
	\(\lambda\vol_M\) and deterministic ball grains of radius $\rho$, the visible range in the direction of \(\gamma\) satisfies
	\[
	-\frac{1}{r}\log \PP(R>r\mid o\notin Z)
	\to
	\lambda\left(
	\kappa_{n-1}\rho^{n-1}
	+
	\varepsilon\bar q
	\int_{B_{n-1}(0,\rho)}\phi(y)\,\dint y
	\right),\qquad\text{as}\;\; r\to\infty.
	\]
	If $q(s)=1+{1\over 2}\sin s$ and $\phi(y)=\|y\|^2$ the second term is $\varepsilon\bar q\int_{B_{n-1}}\phi(y)\,\dint y=\varepsilon{(n-1)\kappa_{n-1}\over n+1}\rho^{n+1}$. 
\end{example}

In particular, Examples \ref{ex2} and \ref{ex3}  show that exact homogeneity is not necessary for an exponential
tail rate. What is needed for this conclusion is the existence of an averaged
transverse tube density along the chosen geodesic.

\subsection*{Acknowledgment}
This work was initiated during a visit of the second author to the University of Lausanne, whose kind hospitality is gratefully acknowledged. The second author would also like to thank Gerhard Knieper for introducing him some years ago to the world of harmonic manifolds.

The authors used ChatGPT as a writing and discussion tool in preparing the manuscript. The mathe\-matical content and any remaining errors are the responsibility of the authors.


\phantomsection

\end{document}